\theoremstyle{plain}
\newtheorem{theorem}                 {Theorem}      [section]
\newtheorem{proposition}  [theorem]  {Proposition}
\theoremstyle{definition}
\newtheorem{remark}       [theorem]  {Remark}
\numberwithin{equation}{section}
\def \rn{{\mathbb R}}
\def \F{\mathcal F}
\def \H{\mathcal H}
\def \V{\mathcal V}
\def \nab#1#2{\nabla_{#1}{#2}}
\def \lb#1#2{[#1,#2]}
\def \veps#1{\varepsilon_#1}
\def \g{\mathfrak{g}}
\def \k{\mathfrak{k}}
\def \m{\mathfrak{m}}
\def \p{\mathfrak{p}}
\DeclareMathOperator{\ad}{ad}
\def \SLR#1{\text{\bf SL}_{#1}(\rn)}
\def \SL2{\widetilde{\text{\bf SL}}_{2}(\rn)}
\def \SO#1{\text{\bf SO}(#1)}
\def \SU#1{\text{\bf SU}(#1)}
\def \su#1{\mathfrak{su}(#1)}
\DeclareMathOperator{\trace}{trace}
\numberwithin{equation}{section}
\begin{document}

\subjclass[2020]{53C35, 53C43, 58E20}
	
\keywords{harmonic morphisms, Lie groups, conformal and minimal foliations}
	
\author{Sigmundur Gudmundsson}
\address{Mathematics, Faculty of Science\\
University of Lund\\
Box 118, Lund 221 00\\
Sweden}
\email{Sigmundur.Gudmundsson@math.lu.se}
	
\author{Thomas Jack Munn}
\address{Mathematics, Faculty of Science\\
University of Lund\\
Box 118, Lund 221 00\\
Sweden}
\email{Thomas.Munn@math.lu.se}

\title
[Harmonic Morphisms and Minimal Conformal Foliations on Lie Groups]
{Harmonic Morphisms on Lie groups\\ and Minimal Conformal Foliations\\ of Codimension Two}

\begin{abstract}
Let $G$ be a Lie group equipped with a left-invariant semi-Riemannian metric.  Let $K$ be a semisimple subgroup of $G$ generating a left-invariant conformal foliation $\F$ of codimension two on $G$.  We then show that the foliation $\F$ is minimal.  This means that locally the leaves of $\F$ are fibres of a complex-valued harmonic morphism. In the Riemannian case, we prove that if the metric restricted to $K$ is biinvariant then $\F$ is totally geodesic.
\end{abstract}
	
% \dedicatory{version 1.052 - \today \ - current editor: SG}
	
\maketitle

%%%%%%%%%%%%%%%%%%%%%%%%%%%%%%%%%%%%%%%%%%%%%%%%%%%%%%%%%%%%
\section{Introduction}
\label{section-introduction}	

In differential geometry the study of minimal submanifolds, of a given Riemannian ambient manifold, plays a central role.  In the literature these submanifolds are often either surfaces of dimension two or submanifolds of codimension one.  In this work we are interested in minimal submanifolds of codimension {\it two}. 

Submersive harmonic morphisms $\phi:(M,g)\to (N^2,h)$ from a Riemannian manifold with values in a surface are useful tools for the construction of such objects.  This is due to the following result by P. Baird and J. Eells from 1981, see \cite{Bai-Eel}. This has later been generalised to the semi-Riemannian situation, see \cite{Gha-Gud-Ott-1}.  For the theory of semi-Riemannian manifolds we refer the reader to the exellent book \cite{ONe} by B. O'Neill.

\begin{theorem}\cite{Bai-Eel}
Let $\phi:(M,g) \to (N^2,h)$ be a horizontally conformal submersion from a Riemannian manifold to a surface. Then $\phi$ is harmonic if and only if $\phi$ has minimal fibres.
\end{theorem}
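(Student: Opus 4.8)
The plan is to reduce the statement to a pointwise identity for the tension field $\tau(\phi)=\trace_g\nabla d\phi$, whose vanishing characterises harmonicity, and to show that when the target is a surface this tension field is simply a negative multiple of the image under $d\phi$ of the mean curvature of the fibres. First I would work locally, fixing $p\in M$ and choosing an orthonormal frame adapted to the orthogonal decomposition $TM=\V\oplus\H$, where $\V=\ker d\phi$ is the vertical distribution tangent to the fibres and $\H$ its horizontal orthogonal complement. Writing the frame as $\{V_1,\dots,V_{m-n}\}$ (vertical) together with $\{X_1,\dots,X_n\}$ (horizontal), where $m=\dim M$ and $n=\dim N$, I would split the trace defining $\tau(\phi)$ into its vertical and horizontal contributions.

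For the vertical part, since $d\phi(V_j)=0$ one has $\nabla d\phi(V_j,V_j)=-d\phi(\nabla_{V_j}V_j)$, and because $d\phi$ annihilates vertical vectors only the horizontal component of $\nabla_{V_j}V_j$ survives. Summing over $j$ then yields $-d\phi(\mu^{\V})$, where $\mu^{\V}=\sum_j\H(\nabla_{V_j}V_j)$ is the mean curvature vector field of the fibres, so that the fibres are minimal precisely when $\mu^{\V}=0$. For the horizontal part I would exploit the horizontal conformality relation $h(d\phi\,X,d\phi\,Y)=\lambda^2 g(X,Y)$ for $X,Y\in\H$, where $\lambda$ is the dilation. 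Differentiating this relation along the horizontal frame and tracing, the horizontal contribution $\sum_k\nabla d\phi(X_k,X_k)$ should collapse to $(2-n)\,d\phi(\grad_{\H}\ln\lambda)$; the coefficient $(2-n)$ arises from the interplay between the $n$ diagonal terms of the trace and the two derivatives of the conformal factor. I expect this horizontal computation, and in particular pinning down that it produces exactly the factor $(2-n)$, to be the main obstacle, since it is the only genuinely analytic step.

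Granting this, I would assemble $\tau(\phi)=(2-n)\,d\phi(\grad_{\H}\ln\lambda)-d\phi(\mu^{\V})$ and then invoke the hypothesis $n=2$, which makes the horizontal term vanish identically and leaves $\tau(\phi)=-d\phi(\mu^{\V})$. To conclude the equivalence, I would note that $\mu^{\V}$ is horizontal and that $d\phi$ restricted to $\H$ is a conformal isomorphism with factor $\lambda>0$, hence injective; therefore $d\phi(\mu^{\V})=0$ if and only if $\mu^{\V}=0$. Thus $\phi$ is harmonic, i.e. $\tau(\phi)=0$, exactly when the fibres are minimal, which is the assertion. As a sanity check on the decisive coefficient, the inversion $x\mapsto x/|x|^2$ of $\rn^n$ is a conformal map with point fibres, so $\mu^{\V}=0$, and it is harmonic if and only if $n=2$, consistent with the vanishing of the $(2-n)$ term precisely in the surface case.
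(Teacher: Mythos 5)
Your proposal is correct, but there is nothing internal to compare it to: the paper states this result verbatim as a quotation from Baird--Eells \cite{Bai-Eel} and supplies no proof, so the relevant comparison is with the original argument. What you outline is the now-standard proof via the fundamental equation for horizontally conformal maps (recorded, e.g., in \cite{Bai-Woo-book}): splitting $\tau(\phi)$ into vertical and horizontal traces, the vertical trace gives $-d\phi(\mu^{\V})$ exactly as you say (your $\mu^{\V}=\sum_j\H(\nabla_{V_j}V_j)$ is the unnormalised mean curvature, which harmlessly absorbs the factor $m-n$ that appears when the averaged version is used), and the horizontal trace is indeed $(2-n)\,d\phi(\grad_{\H}\ln\lambda)$; the coefficient you flag as the main obstacle is right, the computation being the submersive analogue of the classical identity $\tau(\phi)=(2-n)\,d\phi(\grad\ln\lambda)$ for conformal maps between equal-dimensional manifolds, and it goes through because $\nabla d\phi$ is symmetric (so the integrability tensor of $\H$ never enters) and because $\lambda$ is smooth and strictly positive for a genuine submersion. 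Your closing step, injectivity of $d\phi|_{\H}$ together with horizontality of $\mu^{\V}$, correctly yields the equivalence. Baird and Eells themselves argued differently, through a conservation law for the stress--energy tensor $S_\phi=e(\phi)\,g-\phi^{*}h$, which satisfies $\Div S_\phi=-\langle\tau(\phi),d\phi\rangle$; for a horizontally conformal submersion to a surface one has $e(\phi)=\lambda^{2}$ and $\phi^{*}h=\lambda^{2}\,g|_{\H}$, so $S_\phi=\lambda^{2}\,g|_{\V}$ and its divergence is controlled by the mean curvature of the fibres, giving the equivalence without ever computing the full tension field. The trade-off: the conservation-law route is more conceptual and shows directly why the target dimension two is special, while your route buys the explicit formula valid for every target dimension $n$, which is what makes your sanity check with the inversion $x\mapsto x/|x|^{2}$ (harmonic precisely when $n=2$) an exact confirmation rather than a heuristic.
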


Harmonic morphisms are the solutions to an  over-determined,  non-linear system of partial differential equations.  For this reason they are difficult to find, and there exist 3-dimensional Riemannian Lie groups which do not allow for any solutions, not even locally, see \cite {Gud-Sve-5}.  For the general theory of harmonic morphisms between Riemannian
manifolds we refer to the excellent book \cite{Bai-Woo-book} and the regularly updated online bibliography \cite{Gud-bib}.

The next statement was proven by J.C. Wood in his paper \cite{Wood1986} from 1986.  This provides an interesting relation to the theory of conformal foliations with minimal leaves of codimension two.

\begin{theorem}\cite{Wood1986}
A foliation of codimension two on a Riemannian manifold produces harmonic morphisms if and only if it is conformal and has minimal leaves.
\end{theorem}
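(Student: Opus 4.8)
The plan is to derive the statement from the Baird--Eells theorem quoted above, the substance of the matter being the passage between a codimension-two conformal foliation and a local horizontally conformal submersion onto a surface. Throughout I write $\H=T\F^{\perp}$ for the horizontal distribution, which here has rank two, and I argue locally on an open set on which $\F$ is simple, so that the leaf space is a genuine surface $N$ and the natural projection $\pi\colon M\to N$ is a submersion whose fibres are the leaves of $\F$. With this set-up the theorem becomes the assertion that, in codimension two, ``horizontally conformal submersion onto a surface'' and ``conformal foliation'' are two descriptions of the same object, after which Baird--Eells supplies the equivalence between harmonicity and minimality.

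For the implication that a foliation produced by a harmonic morphism is conformal with minimal leaves, I would argue as follows. If the leaves are locally the fibres of a harmonic morphism $\phi\colon M\to N^{2}$, then by the standard characterisation of harmonic morphisms as the harmonic horizontally weakly conformal maps, $\phi$ is horizontally weakly conformal, hence horizontally conformal where it is submersive. The restriction of $g$ to $\H=(\ker d\phi)^{\perp}$ is therefore preserved up to scale by the leaf flow, which is exactly the statement that $\F$ is a conformal foliation. Feeding the same $\phi$ into the Baird--Eells theorem, the harmonicity of $\phi$ forces its fibres to be minimal, so the leaves of $\F$ are minimal. This direction is thus little more than a translation of definitions together with one appeal to Baird--Eells.

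The content lies in the converse. Assume $\F$ is conformal with minimal leaves and fix a local orientation of $\H$. Conformality of $\F$ says precisely that the conformal class of $g|_{\H}$ is basic, i.e.\ invariant along the leaves, so it descends to a conformal class on the leaf surface $N$; together with the chosen orientation this endows $N$ with the structure of a Riemann surface. The decisive step is then the classical existence of isothermal coordinates on a surface: near each point $N$ carries a complex coordinate $z$, and I set $\phi=z\circ\pi$. By construction $\phi$ is a submersion whose fibres are the leaves of $\F$, and because $z$ is a conformal chart for the descended conformal structure, $d\phi$ maps $\H$ conformally onto $\cn$, so $\phi$ is horizontally conformal. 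Since its fibres are the leaves, which are minimal by hypothesis, the Baird--Eells theorem upgrades horizontal conformality to harmonicity; hence $\phi$ is a harmonic morphism and $\F$ produces harmonic morphisms. (Conformal invariance of harmonic morphisms in the surface target makes the choice of metric representative on $N$ immaterial.)

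I expect the main obstacle to be this construction of $\phi$, and specifically the verification that the conformal class genuinely descends to $N$ and that horizontal conformality of $\pi$ is equivalent to conformality of $\F$ in the intended sense. The appeal to isothermal coordinates is what makes codimension two special, for in higher codimension no such local conformal chart need exist, and this is the precise point at which the argument would fail. The remaining ingredients --- local simplicity of $\F$, ensuring that $N$ is a manifold, and the freedom to orient the rank-two bundle $\H$ --- are routine and can be arranged on a sufficiently small neighbourhood of each point.
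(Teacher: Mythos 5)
The paper offers no proof of this statement --- it is quoted directly from Wood \cite{Wood1986} --- and your reconstruction follows precisely the standard route of Wood's original argument: show that in codimension two a conformal foliation is locally the fibre foliation of a horizontally conformal submersion onto a surface (descend the conformal class of $g|_{\H}$ to the local leaf space, orient $\H$, and invoke the classical existence of isothermal coordinates to obtain the chart $z$ and hence $\phi = z\circ\pi$), then apply the Baird--Eells theorem to pass between harmonicity and minimality of the fibres. Your argument is correct, and you rightly identify the existence of isothermal coordinates as the point where codimension two is essential and where the statement fails in higher codimension.
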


In this work we are interested in Lie groups $(G,g)$ equipped with a left-invariant semi-Riemannian metric $g$.  We further assume that $K$ is a Lie subgroup of $G$ generating a left-invariant foliation $\F$ on $G$, of codimension two.  For the special situation when the subgroup $K$ is semisimple we prove the following two results.

\begin{theorem}\label{theorem-totally-geodesic}
Let $G$ be a Lie group with a semisimple subgroup $K$  generating a left-invariant foliation $\F$ on $G$, of codimension two.  Let $g$ be a left-invariant semi-Riemannian metric on $G$ such that the restriction $h$ of $g$ to $K$ is induced by the negative Killing form of $K$.  If the foliation $\F$ on $G$ is conformal, then it is totally geodesic.
\end{theorem}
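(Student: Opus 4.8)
The plan is to push everything down to the Lie algebra level. Write $\g=\k\oplus\m$ for the $g$-orthogonal decomposition of the Lie algebra of $G$, where $\k$ is the vertical Lie algebra of $K$ and $\m=\k^{\perp}$ is the two-dimensional horizontal space. Since $K$ is semisimple, the Killing form is non-degenerate, so the restriction $h=-B_{\k}$ of $g$ is non-degenerate and hence $\m$ is non-degenerate as well; this is what makes the orthogonal projections below legitimate in the semi-Riemannian setting. By left-invariance the Koszul formula collapses to $2\,g(\nabla_V W,X)=-g([V,X],W)-g([W,X],V)$ for $V,W\in\k$ and $X\in\m$, the term $g([V,W],X)$ vanishing because $[V,W]\in\k\perp\m$. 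Consequently the leaves are totally geodesic precisely when, for every $X\in\m$, the endomorphism $\psi_X\colon V\mapsto[V,X]^{\V}$ of $\k$ is skew-symmetric with respect to $h$; this is the purely algebraic statement I aim to establish.

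Next I would encode the conformality hypothesis. Setting $A_V\colon\m\to\m$, $A_V X=[V,X]^{\H}$, the foliation $\F$ is conformal exactly when each $A_V$ lies in the conformal algebra $\mathfrak{co}(\m)=\rn\,\mathrm{Id}\oplus\mathfrak{so}(\m)$ of the two-plane $(\m,g|_{\m})$. The key observation is that $V\mapsto A_V$ is a \emph{representation} of $\k$ on $\m$: using the Jacobi identity together with $[\k,\k]\subseteq\k$ one checks $\text{pr}_{\m}[V,[W,X]]=A_V A_W X$, since the vertical back-reaction $[W,X]^{\V}\in\k$ is annihilated by $\text{pr}_{\m}\circ\ad_V$; hence $A_{[V,W]}=[A_V,A_W]$. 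But $\mathfrak{co}(\m)$ is abelian in dimension two — in every signature, because $\mathfrak{so}(\m)$ is one-dimensional — whereas $\k$ is perfect, $\k=[\k,\k]$. A homomorphism from a perfect algebra into an abelian one is zero, so $A\equiv0$. Thus $[\k,\m]\subseteq\k$; combined with $[\k,\k]\subseteq\k$ this shows that $\k$ is an ideal of $\g$ and, decisively, that $\ad_X|_{\k}$ is a genuine derivation of $\k$ for every $X\in\m$.

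To finish I would invoke semisimplicity once more. Since $A\equiv0$ we have $\psi_X=-\ad_X|_{\k}$, a derivation of $\k$; by Whitehead's lemma every derivation of a semisimple Lie algebra is inner, so $\psi_X=\ad^{\k}_{Z}$ for some $Z\in\k$. Because $h=-B_{\k}$ is $\ad^{\k}$-invariant, every inner derivation is $h$-skew, and therefore $\psi_X$ is $h$-skew for each $X\in\m$. By the reduction of the first paragraph the second fundamental form of the leaves vanishes, so $\F$ is totally geodesic. It is worth noting that $A\equiv0$ already forces $\trace\psi_X=\trace\ad^{\k}_{Z}=0$, which is exactly the minimality conclusion for an \emph{arbitrary} left-invariant metric; the biinvariance hypothesis is used only to upgrade ``traceless'' to ``skew''.

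The step I expect to demand the most care is the proof that $A\equiv0$: one must verify the representation identity $A_{[V,W]}=[A_V,A_W]$ honestly, confirm that conformality genuinely places each $A_V$ in $\mathfrak{co}(\m)$ rather than merely constraining its trace, and check that the two-dimensional conformal algebra is abelian in the Lorentzian horizontal signature as well as the Riemannian one. The remaining technical points — non-degeneracy of $\k$ and of $\m$, and the validity of the left-invariant Koszul reduction in the semi-Riemannian category — all follow from the non-degeneracy of the Killing form of the semisimple $\k$.
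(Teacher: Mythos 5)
Your proposal is correct, and it reaches both pillars of the result by a genuinely different route. Where the paper imports $\H[[\V,\V],\H]=0$ from Proposition \ref{proposition-H[[V,V],H]=0} (quoted from \cite{Gha-Gud-Ott-1}) and combines it with $[\k,\k]=\k$ to conclude $[\k,\m]\subseteq\k$, you re-prove this key verticality statement from scratch: $V\mapsto A_V$ is the quotient representation of $\k$ on $\m\cong\g/\k$, conformality says precisely that its image lies in $\mathfrak{co}(\m)=\rn\,\mathrm{Id}\oplus\mathfrak{so}(\m)$, which is abelian in dimension two for either horizontal signature, and a perfect algebra admits no nonzero homomorphism into an abelian one --- a self-contained and conceptually transparent replacement for the cited structural result. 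For the skewness step the two arguments rest on the same invariance of the Killing form: the paper computes $B([V_i,V_j],X)=B(V_i,[V_j,X])=\veps{i}\,x_j^i$ and uses antisymmetry of the bracket to get $\veps{i}\,x_j^i+\veps{j}\,x_i^j=0$, while you route it through Whitehead's lemma; that detour is correct but superfluous, since every derivation $D$ of any Lie algebra is automatically skew with respect to its Killing form (from $\ad_{DV}=[D,\ad_V]$ one gets $B(DV,W)+B(V,DW)=\trace(D\ad_V\ad_W)-\trace(\ad_V\ad_W D)=0$). A small bonus of your version is that you work with the Killing form of $\k$ itself, exactly as in the theorem's hypothesis, whereas the paper's proof uses the Killing form of $\g$ restricted to $\k$; the two agree because $A\equiv 0$ makes $\k$ an ideal of $\g$, a point the paper leaves implicit. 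Finally, your closing observation that $A\equiv 0$ already forces $\trace(\ad_X|_\k)=0$ gives a shorter path to the minimality result of Theorem \ref{theorem-minimal} than the paper's Gram--Schmidt comparison of two metrics; just beware that in mixed vertical signature the paper states the minimality criterion as $\sum_i\veps{i}\,x_i^i=0$ while your trace argument yields $\sum_i x_i^i=0$, so the claimed identification requires fixing the trace convention for the mean curvature (the two coincide in the Riemannian case relevant to your aside).
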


\begin{theorem}\label{theorem-minimal}
Let $(G,g)$ be a Lie group equipped with a left-invariant semi-Riemannian metric.  Further let $K$ be a subgroup generating a left-invariant conformal foliation $\F$ on $G$, of codimension two.  If $K$ is semisimple then the foliation $\F$ is minimal.
\end{theorem}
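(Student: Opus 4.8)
The plan is to prove Theorem \ref{theorem-minimal} by computing the mean curvature of the foliation $\mathcal{F}$ and showing it vanishes. Let me set up the framework.

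**Setup.** We work at the Lie algebra level. Let $\mathfrak{g}$ be the Lie algebra of $G$ with the left-invariant metric $g = \langle\cdot,\cdot\rangle$, and let $\mathfrak{k}$ be the subalgebra corresponding to $K$. Since $\mathcal{F}$ has codimension two, the orthogonal complement $\mathfrak{m} = \mathfrak{k}^\perp$ is a 2-dimensional horizontal distribution, and $\mathfrak{g} = \mathfrak{k} \oplus \mathfrak{m}$. Let $\mathcal{H}: \mathfrak{g} \to \mathfrak{m}$ and $\mathcal{V}: \mathfrak{g} \to \mathfrak{k}$ denote the horizontal and vertical projections.

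**What needs to be shown.** The foliation $\mathcal{F}$ is minimal precisely when its mean curvature vector field $\mu$ vanishes. For a left-invariant foliation, $\mu$ is determined algebraically: if $\{V_i\}$ is an orthonormal basis for $\mathfrak{k}$ (with signs $\varepsilon_i = \langle V_i, V_i\rangle$ in the semi-Riemannian case), then $\mu = \sum_i \varepsilon_i \mathcal{H}(\nabla_{V_i} V_i)$, where $\nabla$ is the Levi-Civita connection. The minimality condition is thus $\langle \mu, X\rangle = 0$ for all horizontal $X \in \mathfrak{m}$.

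**Strategy outline.**

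1. **Translate minimality into an algebraic trace condition.** Using the Koszul formula for the left-invariant connection, one expresses $\langle \nabla_{V_i} V_i, X\rangle$ in terms of brackets. For a left-invariant metric this reduces to inner products of commutators, and summing over the orthonormal basis of $\mathfrak{k}$ yields a trace-like expression. The key reduction is that minimality is equivalent to
$$
\sum_i \varepsilon_i \langle [V_i, X], V_i\rangle = 0 \quad \text{for all } X \in \mathfrak{m},
$$
i.e. the vanishing of $\operatorname{trace}(\mathrm{ad}_X|_{\mathfrak{k}})$ weighted appropriately.

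2. **Exploit that $K$ is semisimple.** A semisimple Lie algebra equals its own derived subalgebra: $\mathfrak{k} = [\mathfrak{k}, \mathfrak{k}]$. Hence every $V \in \mathfrak{k}$ is a finite sum of brackets $[A, B]$ with $A, B \in \mathfrak{k}$. The trace condition above, being the trace of an endomorphism, should vanish on such brackets because $\operatorname{trace}(\mathrm{ad}_X \circ \mathrm{ad}_{[A,B]})$-type expressions can be rearranged via the cyclic property of the trace and the Jacobi identity.

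3. **Use conformality of $\mathcal{F}$.** This is where the codimension-two hypothesis and conformality enter. Conformality of the foliation means the horizontal part of the metric behaves well: for the generating subalgebra, $\mathcal{H}([V, X])$ satisfies a conformality relation for $V \in \mathfrak{k}$ and $X \in \mathfrak{m}$. Concretely, there is a one-form $\lambda$ on $\mathfrak{k}$ such that the horizontal projection of $\mathrm{ad}_V$ acts as a conformal endomorphism of the 2-dimensional space $\mathfrak{m}$, namely as a scalar multiple of a rotation plus a dilation. This lets one control the mixed terms $\langle [V_i, X], Y\rangle$ for horizontal $X, Y$.

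4. **Combine to kill the mean curvature.** The heart of the argument is to show that the trace expression from Step 1 can be written as a sum of terms each of which is a commutator trace over $\mathfrak{k}$, and then use semisimplicity ($\mathfrak{k} = [\mathfrak{k},\mathfrak{k}]$) together with the conformality relation to conclude each vanishes. Since the trace of $\mathrm{ad}$ over a semisimple algebra of any inner derivation vanishes, and the conformal structure forces the relevant cross terms to cancel in pairs on the 2-dimensional horizontal space, we obtain $\mu = 0$.

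**Main obstacle.** The hard part will be Step 3: correctly extracting the algebraic consequence of conformality in codimension two and matching it against the trace expression so that semisimplicity can be applied. Conformality is a condition on how $\mathrm{ad}_V$ (for vertical $V$) mixes the horizontal and vertical spaces, and one must verify that the precise form of this condition—likely that $\mathcal{V}([V, X])$ vanishes or that $\mathcal{H}([V,X])$ is conformal with a common conformality factor—is exactly what is needed to rewrite $\mu$ as a trace over $\mathfrak{k}$ that then dies by semisimplicity. A secondary technical point is handling the semi-Riemannian signs $\varepsilon_i$ carefully so that the trace identities remain sign-correct; the cyclic and Jacobi manipulations must respect the indefinite inner product rather than a positive-definite one.
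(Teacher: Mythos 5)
There is a genuine gap, and it sits exactly where you flagged your ``main obstacle'' --- but the obstacle is resolved differently from what you guess, and a second, more serious gap is the one you dismiss as secondary. On Step 3: the consequence of conformality you need is neither that $\V([V,X])$ vanishes nor a conformality factor for $\H\circ\ad_V|_{\m}$; it is the paper's Proposition \ref{proposition-H[[V,V],H]=0}, namely $\H[[\V,\V],\H]=0$, which combined with semisimplicity ($\k=[\k,\k]$) yields $\H[\V,\H]=0$, i.e.\ $\ad_X$ and $\ad_Y$ map $\k$ into $\k$. Once that is in hand, the Jacobi identity makes $\ad_X|_{\k}$ a derivation of the semisimple algebra $\k$, every such derivation is inner, and inner derivations are traceless since $\trace(\ad_{[A,B]})=\trace[\ad_A,\ad_B]=0$ --- this is the precise form of your Step 2, and in the Riemannian case (all $\veps i=1$) it does complete the proof.

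The dismissed ``secondary technical point'' is where the proposal actually fails in the stated semi-Riemannian generality. The minimality condition is the \emph{weighted} sum $\sum_{i}\veps i\, x_i^i=0$ with $\veps i=g(V_i,V_i)$, and when the restriction of $g$ to $\k$ is indefinite this is \emph{not} the trace of $\ad_X|_{\k}$: the cyclic/Jacobi trace identities you invoke kill only the unweighted sum $\sum_i x_i^i$. Bridging this is the entire content of the paper's Section 5: one first constructs, from a Cartan involution $\theta$ of $\k$, special metrics $g_\varepsilon$ of every signature and shows via the $\ad$-invariance and symmetry of the Killing form that each diagonal constant $x_i^i$ vanishes individually (Proposition \ref{Proposition-CartanKillingMetricMinimal}); one then transfers the conclusion to the arbitrary metric $g$ by replacing only $g|_{\V\times\V}$ with the $g_\varepsilon$ of the same signature, keeping $g|_{\H\times\H}$ and $g|_{\V\times\H}$ fixed, and exploiting that $\trace(\ad_X)$ is basis-independent while the horizontal contribution $g(\ad_X(Y),Y)$ is unchanged. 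Your outline contains no analogue of this metric-deformation argument, so as written it proves the theorem only when $g|_{\k}$ is definite.
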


Our study is motivated by the investigation performed in \cite{Gha-Gud-Tur-1}, where the following four interesting cases are examined.
\vskip .2cm
\begin{center}
\begin{tabular}{c|c|c}
$K$ & \text{compact} & \text{non-compact} \\
\hline
\text{semisimple}  & $\SU 2\times\SU 2$ & $\SU 2\times\SLR 2$ \\
\text{non-semisimple}  & $\SU 2\times\SO 2$ & $\SLR 2\times\SO 2$ \\
\end{tabular}
\end{center}
\vskip .1cm
This led the authors to conjectures related to our results in Theorems \ref{theorem-totally-geodesic} and \ref{theorem-minimal}.

%%%%%%%%%%%%%%%%%%%%%%%%%%%%%%%%%%%%%%%%%%%%%%%%%%%%%%%%%%%%

%%%%%%%%%%%%%%%%%%%%%%%%%%%%%%%%%%%%%%%%%%%%%%%%%%%%%%%%%%%%
\section{Conformal Foliations of Codimension Two}
\label{section-conformal-foliations}
%%%%%%%%%%%%%%%%%%%%%%%%%%%%%%%%%%%%%%%%%%%%%%%%%%%%%%%%%%%%

Let $(M,g)$ be a semi-Riemannian manifold, $\V$ be an integrable distribution on $M$ and $\H$ its orthogonal complementary distribution.  We will also denote by $\V,\H$ the orthogonal projections onto the corresponding subbundles of $TM$ and by $\F$ the foliation tangent to $\V$. Then the second fundamental form $B^\V$ of $\V$ is given by $$B^{\V}(V,W) = \tfrac{1}{2}\cdot\H(\nab VW + \nab WV), \ \ \text{ where } V,W\in\V.$$
The corresponding second fundamental form $B^\H$ of $\H$ satisfies
$$B^{\H}(X,Y) = \tfrac{1}{2} \cdot \V(\nab XY + \nab YX), \ \ \text{ where } X,Y \in \H.$$
		
The foliation $\F$ tangent to $\V$ is said to be {\it conformal} if there exists a vector field $V\in\V$ such that
$$B^{\H}=g\otimes V$$
and $\F$ is said to be {\it semi-Riemannian} if $V =0$. Furthermore, $\F$ is {\it minimal} if $\trace B^{\V} =0$ and {\it totally geodesic} if $B^{\V}=0$. This is equivalent to the leaves of $\F$ being minimal and totally geodesic submanifolds of $M$, respectively.
\smallskip

In the following we denote by $\veps X$ and $\veps Y$ the causality of the horizontal vectors $X,Y\in\H$, respectively.

\begin{proposition}\label{prop-general-conformality}
\cite{Gha-Gud-Ott-1}
Let $(M,g)$ be a semi-Riemannian manifold, $\V$ be an integrable distribution on $M$ of codimension two and $\H$ be its orthogonal complementary distribution. Then the foliation $\F$ tangent to $\V$ is conformal if and only if 
$$\veps X\cdot B^\H(X,X)-\veps Y\cdot B^\H(Y,Y)=0\ \ \text{and}\ \ B^\H(X,Y)=0,$$
for any local orthonormal frame $\{X,Y\}$ for the horizontal distribution $\H$.  If $\F$ is conformal then it is semi-Riemannian if and only if 
$$\veps X\cdot B^\H(X,X)+\veps Y\cdot B^\H(Y,Y)=0.$$
\end{proposition}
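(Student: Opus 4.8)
The plan is to exploit the fact that $B^\H$ is a symmetric, $\V$-valued tensor field on $\H$: since $\V(\nab XY)$ is $C^\infty$-linear in both arguments (the derivative term drops out because $\V Y=0$), the values of $B^\H$ on an arbitrary pair of horizontal vectors are determined linearly by its values on a local orthonormal frame $\{X,Y\}$. The whole statement then reduces to linear algebra in each fibre of $\H$, the only genuine subtlety being the careful bookkeeping of the causality signs $\veps X=g(X,X)$ and $\veps Y=g(Y,Y)$, each equal to $\pm1$ in the semi-Riemannian setting.

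First I would establish the forward implication. Assuming $\F$ is conformal, there is a vertical field $V$ with $B^\H(Z,W)=g(Z,W)\cdot V$ for all horizontal $Z,W$. Evaluating on the orthonormal frame gives $B^\H(X,X)=\veps X\cdot V$, $B^\H(Y,Y)=\veps Y\cdot V$ and $B^\H(X,Y)=0$. Multiplying the first two relations by $\veps X$ and $\veps Y$ respectively and using $(\veps X)^2=(\veps Y)^2=1$ yields $\veps X\cdot B^\H(X,X)=V=\veps Y\cdot B^\H(Y,Y)$, so both asserted equations hold.

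For the converse I would start from the two stated equations and define the local vertical field $V:=\veps X\cdot B^\H(X,X)$; the first equation then gives $V=\veps Y\cdot B^\H(Y,Y)$ as well, hence $B^\H(X,X)=\veps X\cdot V$ and $B^\H(Y,Y)=\veps Y\cdot V$. Expanding an arbitrary pair $Z=aX+bY$, $W=cX+dY$ by bilinearity and symmetry, and using $B^\H(X,Y)=0$, I would obtain
$$B^\H(Z,W)=(ac\,\veps X+bd\,\veps Y)\,V=g(Z,W)\cdot V,$$
since $g(Z,W)=ac\,\veps X+bd\,\veps Y$ for the orthonormal frame. This is precisely $B^\H=g\otimes V$, so $\F$ is conformal. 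Independence of $V$ from the chosen frame is then automatic: any other orthonormal frame produces a field $V'$ with $B^\H=g\otimes V'$, and evaluating both expressions on a single horizontal unit vector forces $V'=V$.

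Finally, for the semi-Riemannian characterisation I would simply observe that, once conformality is known, $\veps X\cdot B^\H(X,X)+\veps Y\cdot B^\H(Y,Y)=2V$; this vanishes precisely when $V=0$, which is by definition the semi-Riemannian condition. The only point demanding care throughout is the consistent use of $\veps X,\veps Y\in\{\pm1\}$ in place of $+1$; this is exactly the adjustment that distinguishes the semi-Riemannian statement from its Riemannian counterpart, where the two conditions collapse to $B^\H(X,X)=B^\H(Y,Y)$ and $B^\H(X,Y)=0$.
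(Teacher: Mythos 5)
Your proof is correct. Note that the paper itself offers no proof of this proposition: it is quoted verbatim from \cite{Gha-Gud-Ott-1}, so there is no internal argument to compare against. Your route is the natural one and is complete as it stands: you correctly observe that $B^\H$ is tensorial (the derivative term in $\V(\nab Z{(fW)})$ vanishes since $\V W=0$, and the symmetrisation makes no appeal to integrability of $\H$), so everything reduces to fibrewise bilinear algebra on the two-dimensional space $\H$; the sign bookkeeping $(\veps X)^2=(\veps Y)^2=1$ is handled consistently, the converse correctly recovers $g(Z,W)=ac\,\veps X+bd\,\veps Y$ for an orthonormal (not merely orthogonal) frame, and the frame-independence of $V$ follows as you say from non-degeneracy of $g|_\H$ (evaluate $B^\H=g\otimes V=g\otimes V'$ on a single non-null vector). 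The final observation that the sum condition equals $2V$ disposes of the semi-Riemannian characterisation. One small remark: the statement quantifies over all orthonormal frames, and your converse uses only one; this is harmless and in fact yields the slightly stronger fact that the conditions for a single local frame already force conformality.
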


%%%%%%%%%%%%%%%%%%%%%%%%%%%%%%%%%%%%%%%%%%%%%%%%%%%%%%%%%%%%
\section{Lie Foliations of Codimension Two}
\label{section-Lie-foliations}
%%%%%%%%%%%%%%%%%%%%%%%%%%%%%%%%%%%%%%%%%%%%%%%%%%%%%%%%%%%%

Let $(G,g)$ be an $(n+2)$-dimensional semi-Riemannian Lie group i.e. a Lie group $G$ equipped with a left-invariant semi-Riemannian metric $g$.  Further we assume that $K$ is a subgroup of $G$, of  codimension two, equipped with the induced metric.
\smallskip

Let $\{V_1,\dots ,V_n,X,Y\}$ be an orthonormal basis for the Lie algebra $\g$, such that $X,Y\in\H$.  Then the Koszul formula implies that the second fundamental forms $B^{\V}$ and $B^{\H}$ of the horizontal and vertical distributions  satisfy
\begin{eqnarray*}
2\cdot B^{\V}(V_j,V_k) 
&=&\veps X\cdot\big( (g([X,V_j],V_k)+g([X,V_k],V_j)\big)\cdot X\\
&&+\,\veps Y\cdot \big( g([Y,V_j],V_k)+g([Y,V_k],V_j)\big)\cdot Y,
\end{eqnarray*}
\begin{equation}\label{equation-BH}
B^{\H}(X,Y) 
=\sum_{i=1}^n \veps{i} \cdot \big( g([X,V_i],Y)+g([Y,V_i],X)\big)\cdot V_i.
\end{equation}

The following results, first proven in \cite{Gha-Gud-Ott-1}, allow us to describe the structure of conformal foliations on Lie groups. Both are vital to the proof of our results later on.
	
\begin{proposition}\label{proposition-H[[V,V],H]=0}
\cite{Gha-Gud-Ott-1}
Let $(G,g)$ be a semi-Riemannian Lie group with a  subgroup $K$ generating a left-invariant conformal foliation $\F$ on $G$. Let $\V$ be the integrable distribution tangent to $\F$ and $\H$ be the orthogonal complementary distribution of dimension two. Then
$$\H [[\V,\V],\H] =0.$$
\end{proposition}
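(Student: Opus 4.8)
The plan is to turn the bracket identity into a statement about a single endomorphism field on $\H$ and then exploit two structural facts. Because $\F$ is the foliation of $G$ by cosets of $K$, the vertical distribution $\V$ is left-invariant and corresponds to the subalgebra $\k\subseteq\g$, so that $[\V,\V]\subseteq\V$; this will be used to discard correction terms later. The second ingredient is the $\rn$-linear map
$$M\colon\V\longrightarrow\End(\H),\qquad M_V(Z)=\H[V,Z]\quad(Z\in\H).$$
With this notation the asserted identity $\H[[\V,\V],\H]=0$ is exactly the statement that $M_{[V,W]}=0$ for all $V,W\in\V$, which I would establish by proving both that $M_{[V,W]}=[M_V,M_W]$ and that the operators $M_V$ pairwise commute.

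The first step is to read the shape of $M_V$ off the conformality hypothesis. Fixing a local orthonormal horizontal frame $\{X,Y\}$ and writing $a_V=g([V,X],X)$ and $b_V=g([V,X],Y)$, the Koszul formula expresses $B^\H(X,X)$, $B^\H(Y,Y)$ and $B^\H(X,Y)$ through the four scalars $g([V_i,X],X)$, $g([V_i,Y],Y)$, $g([V_i,X],Y)$, $g([V_i,Y],X)$. Feeding these into Proposition~\ref{prop-general-conformality} forces, for every $V\in\V$, the two relations $g([V,Y],X)=-g([V,X],Y)$ and $\veps X\,g([V,X],X)=\veps Y\,g([V,Y],Y)$. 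In the frame $\{X,Y\}$ this says precisely that $M_V$ has equal diagonal entries and skew off-diagonal entries, i.e.
$$M_V=\veps X\,a_V\cdot I+b_V\cdot J,\qquad J=\begin{pmatrix}0&-\veps X\\ \veps Y&0\end{pmatrix}.$$
Since $J$ does not depend on $V$, every $M_V$ is of the form $\alpha_V\,I+\beta_V\,J$ for a fixed matrix $J$, and operators of this form pairwise commute; hence $[M_V,M_W]=0$ for all $V,W\in\V$.

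The second step identifies $M_{[V,W]}$ with the commutator. For $Z\in\H$ the Jacobi identity together with the linearity of the projection $\H$ gives
$$\H[[V,W],Z]=\H[V,[W,Z]]-\H[W,[V,Z]].$$
Splitting $[W,Z]=\H[W,Z]+\V[W,Z]$, the horizontal pieces reassemble as $M_V(\H[W,Z])=M_VM_WZ$, while the vertical pieces produce terms $\H[V,\V[W,Z]]$; here $V\in\V$ and $\V[W,Z]\in\V$, so by $[\V,\V]\subseteq\V$ the bracket $[V,\V[W,Z]]$ is vertical and is killed by $\H$. Treating the second summand symmetrically yields $\H[[V,W],Z]=(M_VM_W-M_WM_V)Z=[M_V,M_W]Z$. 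Combining the two steps gives $M_{[V,W]}=0$, and since $[\V,\V]$ is spanned by such brackets we conclude $\H[[\V,\V],\H]=0$.

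The conceptual crux is the first step: the two scalar conformality equations of Proposition~\ref{prop-general-conformality} are precisely what forces every $M_V$ into the fixed two-parameter family $\{\alpha I+\beta J\}$, on which all commutators vanish, and this is the real source of the result. Once this is recognised, the remainder is routine — the Jacobi computation is purely algebraic, and the annihilation of the correction terms uses only that $\V$ is a subalgebra, so no further metric input is needed there.
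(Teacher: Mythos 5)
Your proof is correct. Note that the paper you were asked to match contains no proof of Proposition \ref{proposition-H[[V,V],H]=0}: it is imported from \cite{Gha-Gud-Ott-1} as a known result, so there is no internal argument to compare against; your write-up supplies the missing proof, and it does so by what is essentially the mechanism of the cited source. The two pillars both check out: (i) pairing the conformality conditions of Proposition \ref{prop-general-conformality} with an arbitrary $V\in\V$ does give $g([V,Y],X)=-g([V,X],Y)$ and $\veps X\,g([V,X],X)=\veps Y\,g([V,Y],Y)$, which in the frame $\{X,Y\}$ puts $M_V=\H\circ\ad_V|_{\H}$ into the two-dimensional unital algebra $\rn I\oplus\rn J$ with $J^2=-\veps X\veps Y\, I$ (complex structure when $\H$ is definite, paracomplex when it is Lorentzian) --- a commutative algebra in either causal case, which is the point where your argument correctly handles the semi-Riemannian generality; and (ii) the Jacobi identity plus the fact that $\k$ is a subalgebra (so $\H[V,\V[W,Z]]=0$) gives $\H[[V,W],Z]=[M_V,M_W]Z$, whence $M_{[V,W]}=0$ on all of $[\V,\V]$ by bilinearity. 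The only implicit assumption, shared with the paper's own framework, is that $g$ restricted to $\H$ is non-degenerate so that an orthonormal horizontal frame with causalities $\veps X,\veps Y$ exists; within that standing hypothesis the proof is complete.
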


In the case when the subgroup $K$ of $G$ is semisimple  we have the following immediate consequence of  Proposition \ref{proposition-H[[V,V],H]=0}.

\begin{proposition}\label{proposition-foliation-is-Riemannian}
\cite{Gha-Gud-Ott-1}
Let $(G,g)$ be a semi-Riemannian Lie group with a semisimple subgroup $K$, of codimension two, generating a left-invariant conformal foliation $\F$ on $G$. Then the foliation $\F$ is semi-Riemannian.
\end{proposition}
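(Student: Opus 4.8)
The plan is to reduce the claim to the semi-Riemannian criterion of Proposition \ref{prop-general-conformality} and then exploit the semisimplicity of $K$ through Proposition \ref{proposition-H[[V,V],H]=0}. First I would record the algebraic consequence of semisimplicity: since $K$ is semisimple its Lie algebra $\k$ satisfies $\k=[\k,\k]$, and because the vertical distribution $\V$ is the left-invariant distribution generated by $\k$, this gives $\V=[\V,\V]$ at the level of left-invariant vector fields. Substituting this identity into the conclusion $\H[[\V,\V],\H]=0$ of Proposition \ref{proposition-H[[V,V],H]=0} immediately upgrades it to the sharper first-order statement
$$\H[\V,\H]=0,$$
that is, $[V,Z]\in\V$ for every $V\in\V$ and every horizontal $Z\in\H$. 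Recognising that semisimplicity converts the second-order bracket condition into this first-order one is the key observation, and it is the step that carries the real content.

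Next I would translate this into information about $B^\H$. Fix a local orthonormal frame $\{X,Y\}$ for $\H$ together with $\{V_1,\dots,V_n\}$ for $\V$. The Koszul formula for left-invariant fields yields $g(\nab XX,V_i)=-g([X,V_i],X)$, and similarly $g(\nab YY,V_i)=-g([Y,V_i],Y)$, so the vertical vectors $B^\H(X,X)$ and $B^\H(Y,Y)$ have $V_i$-components proportional to $g([X,V_i],X)$ and $g([Y,V_i],Y)$ respectively. Now $\H[\V,\H]=0$ forces $[V_i,X]\in\V$ and $[V_i,Y]\in\V$; since $X,Y$ are horizontal and hence orthogonal to $\V$, this gives $g([X,V_i],X)=0$ and $g([Y,V_i],Y)=0$ for every $i$. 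Consequently each vertical component of $\veps X\cdot B^\H(X,X)+\veps Y\cdot B^\H(Y,Y)$ vanishes, so this expression is zero.

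Finally, because $\F$ is assumed conformal, the second part of Proposition \ref{prop-general-conformality} states that $\F$ is semi-Riemannian exactly when $\veps X\cdot B^\H(X,X)+\veps Y\cdot B^\H(Y,Y)=0$, which is precisely what the previous step establishes. I expect no genuine obstacle beyond the careful bookkeeping of the causality signs $\veps X,\veps Y,\veps i$ in the semi-Riemannian Koszul computation; the geometric heart of the argument is the passage from $[[\V,\V],\H]$ to $[\V,\H]$ afforded by semisimplicity.
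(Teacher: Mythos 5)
Your proposal is correct and follows essentially the same route as the paper: semisimplicity gives $[\V,\V]=\V$, which upgrades the conclusion $\H[[\V,\V],\H]=0$ of Proposition \ref{proposition-H[[V,V],H]=0} to $\H[\V,\H]=0$, and the vanishing of the vertical components of $B^\H$ then follows. The only cosmetic difference is that you verify the semi-Riemannian criterion of Proposition \ref{prop-general-conformality} via the Koszul formula, while the paper cites the explicit expression (\ref{equation-BH}) for $B^\H$ directly; both amount to the same computation.
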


\begin{proof}
Let $\V$ be the integrable distribution tangent to $\F$ and $\H$ be the orthogonal complementary distribution of dimension two.  Since the subgroup $K$ is semisimple we know that $[\V,\V]=\V$ and it then follows from Proposition \ref{proposition-H[[V,V],H]=0} and equation (\ref{equation-BH}) that $\H [\V,\H]=0$.  This proves the statement.
\end{proof}

Another interesting result, concerning the structure of the Lie groups involved, is the following.  This leads to important simplifications by allowing us to consider the semisimple subgroup K in terms of its constituent simple factors.

\begin{proposition}\label{proposition-Gha-Gud-Ott-Product}
\cite{Gha-Gud-Ott-1}
Let $(G,g)$ be a semi-Riemannian Lie group with a semisimple product subgroup $K=K_1\times \cdots \times K_r$ generating a left-invariant conformal foliation on $G$. Let $\mathfrak{g} = \mathfrak{k}_1\oplus \cdots \oplus \mathfrak{k}_r \oplus \mathfrak{m}$ be an orthogonal decomposition of the Lie algebra $\mathfrak{g}$ of $G$ and $\mathfrak{m}$ be two dimensional. For each $k$, let $\V_k$ be the integrable distribution generated by $\k_k$, $\V = \V_1 \oplus \cdots \oplus\V_r$ and $\H$ be its orthogonal complementary distribution generated by $\mathfrak{m}$. Then for all $j \neq k$ we have
$$ \V_j [\V_k,\H] = 0 .$$
\end{proposition}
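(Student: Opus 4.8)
The plan is to combine the semi-Riemannian conclusion already available from Proposition \ref{proposition-foliation-is-Riemannian} with the Jacobi identity and the product structure of $\k = \k_1\oplus\cdots\oplus\k_r$. First I would record the three structural facts I intend to use. Since $K$ is semisimple and $\F$ is conformal, the computation in the proof of Proposition \ref{proposition-foliation-is-Riemannian} yields $\H[\V,\H]=0$, equivalently $[\V,\H]\subseteq\V$. Since $K=K_1\times\cdots\times K_r$ is a direct product, the factors commute, so $[\k_i,\k_j]=0$ whenever $i\neq j$. And since each $\k_j$ is semisimple, its centre is trivial.

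Fix $j\neq k$ and pick $V_j\in\k_j$, $V_k\in\k_k$ and $X\in\m$. The key step is the Jacobi identity applied to these three vectors. Because $[V_j,V_k]=0$ by the product structure, one term drops out and the identity collapses to
$$[[V_k,X],V_j]=[[V_j,X],V_k].$$
Now I would invoke $[\V,\H]\subseteq\V$: both $[V_k,X]$ and $[V_j,X]$ lie in $\V=\bigoplus_i\k_i$, so I may write $[V_k,X]=\sum_i A_i$ with $A_i\in\k_i$, and similarly $[V_j,X]=\sum_i B_i$ with $B_i\in\k_i$.

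Bracketing the left-hand side against $V_j$ and using $[\k_i,\k_j]=0$ for $i\neq j$ annihilates every summand except the one in $\k_j$, giving $[[V_k,X],V_j]=[A_j,V_j]\in\k_j$; the symmetric computation shows the right-hand side equals $[B_k,V_k]\in\k_k$. As $j\neq k$ and the sum $\k_1\oplus\cdots\oplus\k_r$ is direct, an element lying simultaneously in $\k_j$ and in $\k_k$ must vanish, forcing $[A_j,V_j]=0$. Since this holds for every $V_j\in\k_j$, the component $A_j=\V_j[V_k,X]$ lies in the centre of the semisimple algebra $\k_j$, hence $A_j=0$. As $V_k\in\k_k$ and $X\in\m$ were arbitrary, this is precisely $\V_j[\V_k,\H]=0$.

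I expect no serious analytic obstacle: once $[\V,\H]\subseteq\V$ is in hand, the argument is the Jacobi identity together with careful bookkeeping. The step that needs the most attention is verifying that, after bracketing with $V_j$ and $V_k$ respectively, the two sides genuinely land in the distinct factors $\k_j$ and $\k_k$; this rests entirely on the commutation $[\k_i,\k_j]=0$ and must be tracked summand by summand. The triviality of the centre of each semisimple factor then supplies the final conclusion without further work.
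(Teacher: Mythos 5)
The paper does not actually prove this proposition: it is quoted from \cite{Gha-Gud-Ott-1} with a citation only, so there is no in-paper argument to compare yours against. Judged on its own merits, your proof is correct. The three ingredients you isolate are all legitimate: $\H[\V,\H]=0$ does follow from Proposition \ref{proposition-H[[V,V],H]=0} together with $[\k,\k]=\k$, exactly as in the paper's proof of Proposition \ref{proposition-foliation-is-Riemannian}; the factors of the direct product satisfy $[\k_i,\k_j]=0$ for $i\neq j$; and each $\k_j$, being a semisimple ideal, has trivial centre. The Jacobi identity with $[V_j,V_k]=0$ gives $[[V_k,X],V_j]=[[V_j,X],V_k]$ with the correct sign, and your summand-by-summand bookkeeping placing the two sides in $\k_j$ and $\k_k$ respectively is sound; since the sum is direct, both sides vanish, and since $A_j$ is independent of $V_j$, the triviality of $Z(\k_j)$ forces $A_j=0$. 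One small point worth making explicit: identifying the direct-sum component $A_j$ with the projection $\V_j[V_k,X]$ uses the hypothesis that the decomposition $\g=\k_1\oplus\cdots\oplus\k_r\oplus\m$ is orthogonal, so that $\V_j$ is exactly extraction of the $\k_j$-summand. For comparison, there is a slightly more economical route that avoids the centre argument altogether: write $V_k=\sum_i[A_i,B_i]$ with $A_i,B_i\in\k_k$ (possible since $[\k_k,\k_k]=\k_k$), expand $[V_k,X]$ by Jacobi, and observe that each resulting term $[A_i,[B_i,X]]-[B_i,[A_i,X]]$ lies in $\k_k$ because $[B_i,X],[A_i,X]\in\V$ and brackets of $\k_k$ with the other factors vanish; projecting onto $\V_j$ then gives zero directly. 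Your version trades that for the Jacobi identity across two factors plus trivial centre; both are equally elementary, and yours has the mild advantage of never needing to decompose $V_k$ into commutators.
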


%%%%%%%%%%%%%%%%%%%%%%%%%%%%%%%%%%%%%%%%%%%%%%%%%%%%%%%%%%%%
\section{Minimality and Total Geodecity}
%%%%%%%%%%%%%%%%%%%%%%%%%%%%%%%%%%%%%%%%%%%%%%%%%%%%%%%%%%%%
	
Let $(G,g)$ be an $(n+2)$-dimensional semi-Riemannian Lie group with a {\it semisimple} subgroup $K$, generating a left-invariant conformal foliation $\F$ on $G$, of codimension two. Further let $\g=\k\oplus \m$ be an orthogonal decomposition of the Lie algebra $\g$ of $G$, where $\k$ is the Lie algebra of $K$. Let $\{ V_1,\dots,V_n,X,Y\}$ be an orthonormal basis for $\g$ such that $V_1,\dots,V_n$ generate the subalgebra $\k$. Then we can describe the Lie brackets of $\k$ by the following system
\begin{eqnarray*}
[V_i,V_j]&=&\sum_{k=1}^n c^k_{ij} V_k,
\end{eqnarray*}
where the $c^k_{ij}$ are real numbers.
The subgroup $K$ is semisimple so $[\k,\k ]=\k$.  Hence it follows from Proposition \ref{proposition-H[[V,V],H]=0} that $\H[\V,\H]=0$, so we have that
$$
\ad_X(V_i)=[X,V_i ]=\sum_{k=1}^n x^k_i V_k,\ \ 
\ad_Y(V_i)=[Y,V_i ]=\sum_{k=1}^n y^k_i V_k,
$$
and $X$ and $Y$ can be chosen such that 
$$[X,Y]=\rho X+ \sum_{k=1}^n \theta^k V_k,$$
for some real coefficients $x^k_i,y^k_i,\rho,\theta^k$.

The structure constants of $\g$ can be used to describe when the foliation $\F$ is minimal or even totally geodesic. First we notice that
\begin{eqnarray*}
2\cdot B^{\mathcal{V}}(V_i,V_j) 
&=& \veps X\cdot \big(g([X,V_i],V_j) +g([X,V_j],V_i)\big)\cdot X\\
&&+\, \veps Y\cdot \big( g([Y,V_i],V_j)+g([Y,V_j],V_i)\big)\cdot Y\\
&=& \veps X\cdot\sum_{k=1}^n \big( g( x_i^k V_k ,V_j)+g( x_j^k V_k,V_i)\big)\cdot X\\
&&+\, \veps Y\cdot\sum_{k=1}^n  \big (g(y_i^k V_k,V_j)+g(y_j^k V_k,V_i)\big)\cdot Y\\
&=& \veps X\cdot (\veps j x_i^j + \veps i x_j^i)\cdot X + \veps Y\cdot (\veps j y_i^j + \veps i y_j^i)\cdot Y. 
\end{eqnarray*}
From this we see that the foliation $\F$ is {\it minimal} if and only if
$$ \sum_{i=1}^n \veps i\, x_i^i = 0 = \sum_{i=1}^n \veps i\, y_i^i.$$
Furthermore, $\F$ is {\it totally geodesic} if and only if for all $1\le i,j\le n$ we have 
$$\veps j\, x_i^j+\veps i\, x_j^i=0=\veps j\, y_i^j + \veps i\, y_j^i.$$

%%%%%%%%%%%%%%%%%%%%%%%%%%%%%%%%%%%%%%%%%%%%%%%%%%%%%%%%%%%%
\section{The Proofs of our Main Results}
%%%%%%%%%%%%%%%%%%%%%%%%%%%%%%%%%%%%%%%%%%%%%%%%%%%%%%%%%%%%

In this section we specify further the left-invariant semi-Riemannian metrics on the Lie groups that we are dealing with.  For the details from Lie theory, we refer to the standard work \cite{Hel} of S. Helgason.
\smallskip

Let $K$ be a real semisimple Lie group with Lie algebra $\k$.  Let $B:\k\times\k\to\rn$ be its symmetric, bilinear and non-degenerate Killing form with  $$B(V,W)=\trace(\ad_V \circ \ad_W).$$
Let $\tilde K$ be the maximal compact subgroup of $K$ with Lie algebra $\tilde\k$.  Then we have an orthogonal decomposition $\k=\tilde\k\oplus\p$ of $\k$ with respect to the Killing form $B$, which is negative definite on $\tilde\k$ and positive definite  on $\p$.
\smallskip

\begin{proof}[Proof of Theorem \ref{theorem-totally-geodesic}]
Let $B:\g\times\g\to\rn$ be the Killing form of the Lie algebra $\g$ of $G$ and suppose that the metric $g$ restricted to $K$ is equal to the negative Killing form i.e.
$$ g|_{\V \times \V} = -B_{\V \times \V}.$$
 Further, let $\{V_1,\dots V_n\}$ be an orthonormal basis for the semisimple subalgebra $\k$ of $\g$ with respect to $g$ and can write $$g(V_i,V_j)=-B(V_i,V_j)=\veps{i}\delta_{ij},$$ for all $1\le i,j\le n$.  Then
\begin{eqnarray*}
B([V_i,V_j],X) 
&=& B(V_i,[V_j,X])\\	
&=&-B(V_i,\sum_{k=1}^n x^k_j V_k)\\	
&=& \veps {i} x_j^i.
\end{eqnarray*}
Since $B([V_i,V_j],X) = -B([V_j,V_i],X)$, the above steps show that 
$$\veps {i}\, x_j^i +\veps {j}\, x_i^j=0.$$ 
By an identical argument replacing $X$ with $Y$ it follows that 
$$\veps {i}\, y_j^i +\veps {j}\, y_i^j=0.$$ 
This implies that $\mathcal{F}$ is totally geodesic. 	
\end{proof}

\begin{remark}
It should be noted that it follows from Theorem \ref{theorem-totally-geodesic} that if the semisimple subgroup $K$ of $G$ is compact and equipped with a biinvariant Riemannian metric, then the foliation $\F$ is totally geodesic.  This is due to the fact that such a metric is always proportional to the one induced by the negative Killing form in the compact case.
\end{remark}

Let $\theta:\k\to\k$ be a {\it Cartan involution} on $\k$, i.e. an involutive automorphism, such that the bilinear form $(V,W)\mapsto -B(V,\theta(W))$ is positive definite on $\k$.  Such an involution is unique up to inner automorphisms, see p.185 of \cite{Hel} for details.  This induces the Cartan-Killing metric $g$ on $K$ satisfying
$$g(V,W) = -B(V,\theta (W)),$$
which is Riemannian.

Let $\{V_1,\dots, V_n\}$ be an orthonormal basis, with respect to the Cartan-Killing metric $g$, then for any $\varepsilon =(\veps  1,\dots,\veps n)\in\{\pm 1\}^n$
we define the semi-Riemannian metric $g_\varepsilon$ on $K$ by
$$g_\varepsilon (V_i,V_j) = \veps i\cdot g(V_i,V_j).$$

The proof of Theorem \ref{theorem-minimal} is rather straightforward, first we will show in Proposition \ref{Proposition-CartanKillingMetricMinimal} that there exists a metric of any given signature such that $\mathcal{F}$ is minimal.  For a fixed signature we then show that the {\it minimality} of the foliation $\F$ is invariant of the left-invariant metric $g_\varepsilon$ chosen.

\begin{proposition} \label{Proposition-CartanKillingMetricMinimal}
Let $K$ be a subgroup of the Lie group $G$ generating a left-invariant conformal foliation $\mathcal{F}$ of $G$ of codimension two. If $K$ is semisimple and equipped with one of the semi-Riemannian metrics $g_\varepsilon$, then $\mathcal{F}$ is minimal.
\end{proposition}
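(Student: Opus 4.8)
The plan is to use the minimality criterion established just above and reduce it to the vanishing of the diagonal structure constants $x_i^i$ and $y_i^i$. Since $K$ is semisimple, Proposition \ref{proposition-foliation-is-Riemannian} yields $\H[\V,\H]=0$, so that $[X,V_i]$ and $[Y,V_i]$ lie in $\k$ for every $i$; in particular $\ad_X|_\k$ and $\ad_Y|_\k$ are well-defined endomorphisms of $\k$. Recall that $\F$ is minimal precisely when $\sum_{i=1}^n\veps i\,x_i^i=0=\sum_{i=1}^n\veps i\,y_i^i$, so it suffices to show that each $x_i^i$ and each $y_i^i$ vanishes for a conveniently chosen orthonormal basis.

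First I would observe that $\ad_X|_\k$ is a derivation of $\k$: for $V,W\in\k$ the Jacobi identity gives $[X,[V,W]]=[[X,V],W]+[V,[X,W]]$, and the terms on the right lie in $\k$. Because $\k$ is semisimple every derivation is inner, and hence skew-symmetric with respect to the Killing form $B$ of $\k$, that is $B(\ad_X V,W)=-B(V,\ad_X W)$ for all $V,W\in\k$. The same reasoning applies verbatim to $\ad_Y|_\k$. This skew-symmetry is the structural heart of the argument, and crucially it holds no matter how $X$ and $Y$, and therefore the constants $x_i^k,y_i^k$, depend on the particular metric $g_\varepsilon$.

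Next I would select the orthonormal basis $\{V_1,\dots,V_n\}$ to be adapted to the Cartan decomposition $\k=\tilde\k\oplus\p$. This is legitimate because $\tilde\k$ and $\p$ are orthogonal both for $B$ and for the Cartan--Killing metric $g$, hence also for $g_\varepsilon$, so one may take an orthonormal basis inside each summand. For such a basis $B$ is diagonal, with $B(V_i,V_i)=\pm1\neq0$. The skew-symmetry then forces $B(\ad_X V_i,V_i)=-B(V_i,\ad_X V_i)=-B(\ad_X V_i,V_i)$, hence $B(\ad_X V_i,V_i)=0$; expanding $\ad_X V_i=\sum_k x_i^k V_k$ gives $x_i^i\,B(V_i,V_i)=0$, whence $x_i^i=0$. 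The identical computation yields $y_i^i=0$.

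Substituting $x_i^i=y_i^i=0$ into the minimality criterion shows that $\F$ is minimal for every sign vector $\varepsilon$, which is the assertion. I do not anticipate a deep obstacle here: the only point requiring care is the choice of a basis that is simultaneously $g_\varepsilon$-orthonormal and diagonalizes $B$, so that the $B$-skewness isolates the diagonal constants. The semisimplicity of $K$ does all of the real work, through the fact that a derivation of a semisimple Lie algebra is skew-symmetric for its Killing form.
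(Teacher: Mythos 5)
Your proof is correct, and its computational core is the same as the paper's: both arguments reduce minimality to the vanishing of the diagonal structure constants $x_i^i$, $y_i^i$, and obtain this from the skew-symmetry of $\ad_X|_{\k}$ and $\ad_Y|_{\k}$ with respect to the Killing form $B$, evaluated in a $\theta$-adapted basis where $B$ is diagonal with entries $\pm 1$. Where you genuinely differ is in how the skew-symmetry is justified. The paper applies the invariance identity $B([V_i,V_j],X)=B(V_i,[V_j,X])$ with $X$ as an argument, which, read literally with $B$ the Killing form of $\k$, does not typecheck since $X\notin\k$; it is only legitimate for the Killing form of $\g$, and the two forms agree on $\k\times\k$ precisely because $[\k,\m]\subseteq\k$ here, a point the paper leaves tacit. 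Your route --- $\H[\V,\H]=0$ makes $\ad_X|_{\k}$ a derivation of $\k$ via the Jacobi identity, derivations of a semisimple algebra are inner, and inner derivations are $B$-skew --- stays entirely inside $\k$ and so removes this ambiguity; in fact the detour through innerness is unnecessary, since every derivation of any Lie algebra is skew with respect to its Killing form, so semisimplicity is really only used for $[\k,\k]=\k$, for the non-degeneracy $B(V_i,V_i)\neq 0$, and for the Cartan decomposition. Two minor points: the relation $\H[\V,\H]=0$ follows from Proposition \ref{proposition-H[[V,V],H]=0} combined with $[\k,\k]=\k$, as in Section 4, rather than from the statement of Proposition \ref{proposition-foliation-is-Riemannian} which you cite; and your insistence on a basis that is simultaneously $g_\varepsilon$-orthonormal and $B$-diagonal matches the paper's implicit assumption (its notation $\theta_j=\theta(V_j)$ presupposes the $V_j$ are $\theta$-eigenvectors), though if $g_\varepsilon$ were built from a non-adapted $g$-orthonormal basis the claimed $g_\varepsilon$-orthogonality of $\tilde\k$ and $\p$ would need an additional argument --- a looseness you share with, rather than introduce beyond, the paper.
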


\begin{proof}
Since $K$ is equipped with the semi-Riemannian metric $g_\varepsilon$, it follows that  $B(V_i,V_j) = \veps{i} \delta_{ij}\theta_j$ for an orthonormal basis $\{V_1,\dots V_n\}$.  Here we denote $\theta(V_j)$ by  $\theta_j$.  Then
\begin{eqnarray*}
B([V_i,V_j],X) 
&=& B(V_i,[V_j,X])\\	
&=&-B(V_i,\sum_{k=1}^n x^k_j V_k)\\	
&=& \veps {i} \theta_{i} x_j^i.
\end{eqnarray*}
Since $B([V_i,V_j],X) = -B([V_j,V_i],X)$, the above steps show that 
$$\veps {i} \theta_{i} x_j^i = -\veps {j} \theta_{j} x_i^j.$$ 
Then for $i=j$ we get that $x_i^i=-x_i^i = 0$. By an identical argument replacing $X$ with $Y$ it follows that $y^i_i=-y^i_i=0$. This implies that $\mathcal{F}$ is minimal. 	
\end{proof}

We are now ready to prove our main result.

\begin{proof}[Proof of Theorem \ref{theorem-minimal}]
Let $\{V_1,...,V_n,X,Y\}$ be an orthonormal basis for $\mathfrak{g}$ with respect to the metric $g$ such that $V_1,...,V_n$ generate the subalgebra $\k$. 
Then notice that
$$\trace (\ad_X) = \sum_{i=1}^n \veps i x_i^i +g(\ad_X (Y),Y)$$  and $$\trace (\ad_Y) = \sum_{i=1}^n \veps i y_i^i + g(\ad_Y(X),X).$$
	
Now equip $G$ with an additional left-invariant metric $\hat g$, which we can fully describe by its action on $\mathfrak{g}$.
First define $\hat g|_{\V \times\V}$ to be the left-invariant semi-Riemanian metric $g_\varepsilon$ with the same signature as $\hat g|_{\V \times \V}$ and then let
$$
\hat g|_{\H \times \H} 
= g|_{\H \times \H}, \ \ \hat g|_{\V \times \H} 
= g|_{\V \times \H}.
$$
Then we can use the Gram-Schmidt process to obtain an orthonormal basis $\{W_1,...,W_n,X,Y\}$ with respect to the metric $\hat g$. Since both metrics are left-invariant and of the same signature, changing the metric simply amounts to a change of basis. Without loss of generality we choose an ordering so that $$g(V_i,V_i) = \hat{g}(W_i,W_i) = \veps{i}.$$ Then it follows from Proposition \ref{Proposition-CartanKillingMetricMinimal} that the structure constants with respect to the new metric $\hat g$,  $$\veps{i} \hat x_i^i=\hat g(W_i,[X,W_i]),$$ 
are all equal to zero. Then since the trace of an operator is invariant under the change of basis, and the fact that both $X$ and $Y$ are unchanged by the above Gram-Schmidt process, we get that
\begin{eqnarray*}
\sum_{i=1}^n \veps i x_i^i  
&=& \trace (\ad_X) -g(\ad_X(Y),Y)\\
&=& \trace (\ad_X) -\hat g(\ad_X(Y),Y)\\
&=& \sum_{i=1}^n \veps i \hat x_i^i \\
&=& 0.
\end{eqnarray*}
Similarly we obtain 
$$\sum_{i=1}^n \veps i y_i^i = \sum_{i=1}^n \veps i \hat y_i^i=0,$$ so $\F$ is minimal.
\end{proof}

%%%%%%%%%%%%%%%%%%%%%%%%%%%%%%%%%%%%%%%%%%%%%%%%%%%%%%%%%%%%
\section{The Berger metrics on the subgroup $\SU 2$}
%%%%%%%%%%%%%%%%%%%%%%%%%%%%%%%%%%%%%%%%%%%%%%%%%%%%%%%%%%%%

If we assume, in Theorem \ref{theorem-totally-geodesic}, that the semisimple subgroup $K$ of $G$ is compact then it is   equipped with a biinvariant metric induced by the negative Killing form of the Lie algebra $\k$.  This implies that the corresponding induced left-invariant foliation $\F$ on $G$ is totally geodesic. In this section we present a $6$-dimensional family of Riemannian Lie groups $(G,g)$ with  the compact subgroup $K=\SU 2$, equipped with the well-known Berger metrics $h_\lambda$ parametrised by the positive real number $\lambda$. These metrics are not biinvariant when $\lambda\neq 1$.
\smallskip

The special unitary group $\SU 2$ is diffeomorphic to the
standard three dimensional sphere $S^3$.  Its Lie algebra is generated by
$$
e_1=\begin{pmatrix}0 & -1\\ 1 &  0\end{pmatrix}, \ \
e_2=\begin{pmatrix}i &  0\\ 0 & -i\end{pmatrix}, \ \
e_3=\begin{pmatrix}0 &  i\\ i &  0\end{pmatrix}.
$$
As a compact Lie group $\SU 2$ carries the well-known
one dimensional family of left-invariant Berger metrics $h_\lambda$ with $\lambda\in\rn^+$. 

For the Berger metric $h_\lambda$ on the special unitary group $\SU 2$ we have the orthonormal basis $\{A=\lambda\, e_1,B=e_2,C=e_3\}$ for its Lie algebra $\su 2$ satisfying the bracket relations
$$\lb AB=2\lambda C,\ \ \lb CA=2\lambda B,\ \ \lb BC=2A/\lambda.$$
We then extend this to an orthonormal basis $\{A,B,C,X,Y\}$ for the Lie algebra $\g$ of a 5-dimensional Lie group $G$, satisfying the additional bracket relations
\begin{eqnarray*}
\lb AX &=& -\lambda^2x_3 B-\lambda^2x_5 C,\\
\lb AY &=& -\lambda^2x_4 B-\lambda^2x_6 C,\\
\lb BX &=& x_3 A+z_3 C,\\
\lb BY &=& x_4 A+z_4 C,\\
\lb CX &=& x_5 A-z_3 B,\\
\lb CY &=& x_6 A-z_4 B,\\
\lb XY &=& \rho X+\theta_1 A+\theta_2 B+\theta_3 C,
\end{eqnarray*}
where $\theta_1,\theta_2,\theta_3$ are given by
\begin{equation*}
\begin{pmatrix}
\theta_1  \\
\theta_2 \\
\theta_3
\end{pmatrix}
=\frac 12
\begin{pmatrix}
\rho\, z_3/\lambda+\lambda\,(x_3\,x_6-x_4\,x_5) \\
\lambda\,(\rho\, x_5-x_3\,z_4+x_4\,z_3) \\
-\lambda\,(\rho\, x_3+z_4\,x_5-z_3\,x_6)
\end{pmatrix}.
\end{equation*}
Here the variables $x_3,x_4,x_5,x_6,\rho\in\rn$ and $\lambda\in\rn^+$ parametrise a six dimensional family of Riemannian Lie groups with subgroup $(K,h_\lambda)$.  According to Theorem 4.1 of \cite{Gud-12} this generates a left-invariant conformal foliation $\F$ with minimal leaves of codimension two.  Furthermore the foliation is totally geodesic if and only if $\lambda=1$ i.e. the Berger metric $h_\lambda$ is biinvariant.

%%%%%%%%%%%%%%%%%%%%%%%%%%%%%%%%%%%%%%%%%%%%%%%%%%%%%%%%%%%%
\section{The Riemannian Leaf Space $L^2=G/K$ }
%%%%%%%%%%%%%%%%%%%%%%%%%%%%%%%%%%%%%%%%%%%%%%%%%%%%%%%%%%%%

Let $(G,g)$ be a connected Lie group equipped with a left-invariant Riemannian metric and let $K$ be a {\it closed} semisimple subgroup of $G$ generating a conformal foliation $\F$ on $G$.  Since $K$ is closed there exists a unique differentiable structure on the leaf space $L^2=G/K$ such that the natural projection $\pi:G\to G/K$ is differentiable.  

According to Proposition \ref{proposition-foliation-is-Riemannian}, 
the conformality of the foliation $\F$ implies that it is Riemannian.  This means that for all $V\in\V$ and $X,Y\in\H$ we have
$$g([V,X],Y)+g(X,[V,Y])=0.$$
In other words, the metric $g$ on $G$ is $\text{ad}(K)$-invariant and hence induces a unique Riemannian metric $h$ on the $2$-dimensional leaf space $L=G/K$ such that the projection $\pi:(G,g)\to (L^2,h)$ is a Riemannian submersion.  Since the $2$-dimensional
Riemannian space $G/K$ is homogeneous it is of constant Gaussian curvature $K_L$.  For this special situation, we will now employ O'Neill's famous curvature formula 
\begin{equation}\label{equation-ONeill}
K_L=K(X,Y)+\tfrac 34\cdot |\V [X,Y]|^2
\end{equation}
for Riemannian submersions, see \cite{ONe-66}, to determine the value of $K_L$.  Here $K(X,Y)$ is the sectional curvature of the two dimensional horizontal space $\H$ generated by $X,Y\in\g$.

The following useful result can be found in Milnor's paper \cite{Mil} with a incomplete proof sketch.  For this reason we have produced a full proof, to be found in Appendix \ref{appendix}.

\begin{proposition}
Let $(G,g)$ be a Lie group equipped with a left-invariant Riemannian metric and 
$\{e_1,e_2,\dots ,e_m\}$ be an orthonormal basis for the Lie algebra $\g$ of $G$.  If the Lie brackets of $\g$ are given by 
$$[e_i,e_j]=\sum_{k=1}^m\lambda_{ij}^k\,e_k$$
then the sectional curvatures $K(e_i,e_j)$ of $(G,g)$ satisfy
\begin{eqnarray}\label{general-sectional-sg}
K(e_i,e_j)
&=& -\sum_{k=1}^m \lambda_{kj}^j \lambda_{ki}^i
	-\tfrac{1}{2}\cdot\sum_{k=1}^m\lambda_{ij}^k (\lambda_{ij}^k+ \lambda_{ik}^j +\lambda_{kj}^i) \\ 
&& +\tfrac{1}{4}\sum_{k=1}^m (\lambda_{ij}^k + \lambda_{ki}^j + \lambda_{kj}^i) \cdot ( \lambda_{ji}^k +\lambda_{ki}^j+\lambda_{kj}^i).\notag
\end{eqnarray}
\end{proposition}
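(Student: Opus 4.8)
The plan is to compute the curvature directly from the Levi-Civita connection, following the standard approach for left-invariant metrics. First I would recall that for left-invariant vector fields the inner products $g(e_i,e_j)$ are constant functions on $G$, so the three differentiation terms in the Koszul formula vanish and it reduces to a purely algebraic identity. With $\{e_1,\dots,e_m\}$ orthonormal and $g([e_i,e_j],e_k)=\lambda_{ij}^k$, this gives
\begin{equation*}
2\,g(\nabla_{e_i}e_j,e_k)=\lambda_{ij}^k-\lambda_{jk}^i+\lambda_{ki}^j,
\end{equation*}
so that $\nabla_{e_i}e_j=\tfrac12\sum_{k}(\lambda_{ij}^k-\lambda_{jk}^i+\lambda_{ki}^j)\,e_k$. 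I would record these connection coefficients as the basic building blocks for everything that follows, and note the antisymmetry $\lambda_{ij}^k=-\lambda_{ji}^k$, which will be used repeatedly to collapse terms.

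Next I would use the curvature convention $R(e_i,e_j)e_j=\nabla_{e_i}\nabla_{e_j}e_j-\nabla_{e_j}\nabla_{e_i}e_j-\nabla_{[e_i,e_j]}e_j$, together with the fact that orthonormality makes the denominator in the sectional curvature equal to one, so that $K(e_i,e_j)=g(R(e_i,e_j)e_j,e_i)$. The computation then splits into three pieces. The two double-covariant-derivative terms each expand into a double sum over the connection coefficients above, which after pairing with $e_i$ produce the quadratic expressions in the structure constants; the term $\nabla_{[e_i,e_j]}e_j=\sum_k\lambda_{ij}^k\,\nabla_{e_k}e_j$ contributes the factor $\lambda_{ij}^k$ multiplying a single connection coefficient, and is the source of the $-\tfrac12\sum_k\lambda_{ij}^k(\cdots)$ block in the stated formula.

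The main obstacle is the bookkeeping: I expect to generate on the order of a dozen quadratic terms in the $\lambda$'s, and the real work is regrouping them into the three symmetric combinations appearing in \eqref{general-sectional-sg}. The term $-\sum_k\lambda_{kj}^j\lambda_{ki}^i$ arises from the ``divergence'' contributions (products where one index is summed twice against the same slot), and isolating it cleanly requires using antisymmetry to cancel the cross terms that otherwise mix it with the other blocks. I would organise the calculation by first writing $g(\nabla_{e_i}\nabla_{e_j}e_j,e_i)$ and $g(\nabla_{e_j}\nabla_{e_i}e_j,e_i)$ as sums of triple products $\tfrac14(\lambda_{ij}^k-\lambda_{jk}^i+\lambda_{ki}^j)(\cdots)$, then symmetrise and relabel summation indices so that the coefficients $(\lambda_{ij}^k+\lambda_{ki}^j+\lambda_{kj}^i)$ and $(\lambda_{ji}^k+\lambda_{ki}^j+\lambda_{kj}^i)$ emerge together, yielding the $\tfrac14$-block. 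Verifying that every surplus term cancels under the Jacobi-independent rearrangement (no use of the Jacobi identity is needed, only antisymmetry and relabelling) is the delicate step where sign errors are most likely, so I would keep a careful ledger of each term's origin as the identity is assembled.
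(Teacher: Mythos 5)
Your proposal is correct and follows essentially the same route as the paper's proof in Appendix \ref{appendix}: the Koszul formula collapses to the same algebraic connection coefficients (your $\tfrac12(\lambda_{ij}^k-\lambda_{jk}^i+\lambda_{ki}^j)$ equals the paper's $\tfrac12(\lambda_{ki}^j+\lambda_{kj}^i+\lambda_{ij}^k)$ by antisymmetry), and the curvature is assembled from the same three terms $g(\nabla_{e_i}\nabla_{e_j}e_j,e_i)$, $g(\nabla_{e_j}\nabla_{e_i}e_j,e_i)$ and $g(\nabla_{[e_i,e_j]}e_j,e_i)$, with the $-\tfrac12$-block and the $\tfrac14$-block arising exactly where you predict, using only antisymmetry and relabelling of summation indices. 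Executing the bookkeeping as outlined yields \eqref{general-sectional-sg}.
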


\begin{proof}
The detailed calculations can be found in Appendix \ref{appendix}.
\end{proof}

For the horizontal distribution $\H$ of the above mentioned foliation $\F$ on the Lie group $G$, we can now choose an orthonormal basis $\{X,Y\}$ such that  $$\H[X,Y] = \rho \cdot X.$$
This means that for the following structure constants we have 
\begin{equation}\label{equation-1}
\lambda_{XY}^X=g([X,Y],X)=\rho=-g([Y,X],X)=-\lambda_{YX}^X
\end{equation}
and
\begin{equation}\label{equation-2}
\lambda_{XY}^Y=g([X,Y],Y)=0=-g([Y,X],Y)=-\lambda_{YX}^Y.
\end{equation}

Since $\F$ is Riemannian, we know that for each $V \in \V$, the adjoint action on the horizontal space $\H$ for any $E,F\in\H$ satifies
$$g([V,E],F)+g(E,[V,F])=0.$$
Writing this in terms of the structure constants we get 
\begin{equation}\label{equation-2}
\lambda_{V X}^X = \lambda_{V Y}^Y=0\ \ \text{and}\ \ \lambda_{V X}^Y = - \lambda_{V Y}^X.
\end{equation}
If we now substitute $e_i=X$ and $e_j=Y$ into equation (\ref{general-sectional-sg}) and simplify using the relations (\ref{equation-1}) and (\ref{equation-2}) we then yield
\begin{eqnarray*}
K(X,Y)
&=& -\rho^2-\tfrac{3}{4}\sum_{k=1}^n(\theta^k )^2.
\end{eqnarray*}
It then immediately follows from O'Neill's formula (\ref{equation-ONeill}) that  the leaf space $L$ has constant non-positive Gaussian curvature $K_{L} = - \rho^2$.

%%%%%%%%%%%%%%%%%%%%%%%%%%%%%%%%%%%%%%%%%%%%%%%%%%%%%%%%%%%%

\appendix

\section{Sectional curvatures in Riemannian Lie groups}
\label{appendix}

%%%%%%%%%%%%%%%%%%%%%%%%%%%%%%%%%%%%%%%%%%%%%%%%%%%%%%%%%%%%

Let $(G,g)$ be an $m$-dimensional Riemannian Lie group and $\{e_1,e_2,\dots,e_m\}$ be an orthonomal basis for its Lie algebra $\g$ satisfying the bracket relations 
$$[e_i,e_j] = \sum_k \lambda_{ij}^k e_k .$$
According to the Koszul formula for left-invariant vector fields on $G$, we then have 
\begin{eqnarray*} 
	g(\nabla_{e_i} {e_j}, {e_k})
	&=&\tfrac{1}{2} \{ g([{e_k},{e_i}],{e_j}) + g([{e_k},{e_j}],{e_i})+g({e_k},[e_{i},e_{j}])\}\\
	&=&\tfrac{1}{2}\sum_k( \lambda_{ki}^j +\lambda_{kj}^i+\lambda_{ij}^k ).
\end{eqnarray*} 
This implies the following for the Levi-Civita connection in terms of the structure constants
\begin{equation}\label{Levi-Civita}
	\nabla_{e_i}e_j = \sum_k g(\nabla_{e_i}e_j, e_k) \cdot e_k = \tfrac{1}{2}\sum_k ( \lambda_{ki}^j +\lambda_{kj}^i+\lambda_{ij}^k )\cdot e_k.
\end{equation}
In particular, we yield 
$$ \nabla_{e_j}e_j 
= \tfrac{1}{2}\sum_k ( \lambda_{kj}^j +\lambda_{kj}^j+\lambda_{jj}^k )\cdot e_k
=\sum_k \lambda_{kj}^j\cdot e_k$$
and hence
\begin{eqnarray*}
	g(\nabla_{e_i} \nabla_{e_j}e_j,e_i)
	&=&\sum_k \lambda_{kj}^j\cdot g(\nabla_{e_i} e_k,e_i)\\
	&=&\tfrac 12\sum_k \lambda_{kj}^j\cdot (\lambda_{ii}^k+\lambda_{ik}^i+\lambda_{ik}^i)\\
	&=&\sum_k \lambda_{kj}^j\cdot \lambda_{ik}^i.
\end{eqnarray*}
Now employing equation (\ref{Levi-Civita}) we obtain
\begin{eqnarray*}
	g(\nabla_{e_j} \nabla_{e_i}e_j,e_j)
	&=&\tfrac{1}{2}\sum_k ( \lambda_{ki}^j +\lambda_{kj}^i+\lambda_{ij}^k )\cdot g(\nabla_{e_j}e_k,e_i)\\
	&=&\tfrac{1}{4}\sum_k ( \lambda_{ki}^j +\lambda_{kj}^i+\lambda_{ij}^k )\cdot (\lambda_{ij}^k+\lambda_{ik}^j+\lambda_{jk}^i).
\end{eqnarray*}
Furthermore
\begin{eqnarray*}
	g(\nabla_{[e_i,e_j]}e_j,e_i)
	&=&\sum_k\lambda_{ij}^k\cdot g(\nabla_{e_k}e_j,e_i)\\
	&=&\tfrac 12\sum_k\lambda_{ij}^k\cdot (\lambda_{ik}^j+\lambda_{ij}^k+\lambda_{kj}^i).
\end{eqnarray*}
Now combining these terms we have the sectional curvature

\begin{eqnarray}\label{general-sectional-2}
K(e_i,e_j)&=&g(R(e_i,e_j)e_j,e_i)\\ \notag
&=& -\sum_k \lambda_{kj}^j \lambda_{ki}^i
	-\tfrac{1}{2}\cdot\sum_k\lambda_{ij}^k (\lambda_{ij}^k+ \lambda_{ik}^j +\lambda_{kj}^i) \\ 
&& +\tfrac{1}{4}\sum_k (\lambda_{ij}^k + \lambda_{ki}^j + \lambda_{kj}^i) \cdot ( \lambda_{ji}^k +\lambda_{ki}^j+\lambda_{kj}^i).\notag
\end{eqnarray}

%%%%%%%%%%%%%%%%%%%%%%%%%%%%%%%%%%%%%%%%%%%%%%%%%%%%%%%%%%%%

\end{document}